\documentclass[letterpaper]{amsart}
\usepackage[utf8]{inputenc}

\usepackage{amsfonts}
\usepackage{amssymb}
\usepackage{tikz}
\usepackage{enumerate}
\usepackage{xcolor}
\usepackage{graphicx}
\usepackage{microtype}
\usepackage{mathrsfs}

\usepackage{amsaddr}

\theoremstyle{plain}
\newtheorem{thm}{Theorem}

\newtheorem{cor}[thm]{Corollary}

\theoremstyle{definition}
\newtheorem{defn}{Definition}
\newtheorem{ex}{Example}

\theoremstyle{remark}

\DeclareMathOperator{\rk}{rk}

\DeclareMathOperator{\dhr}{DHR}

\newcommand{\graph}{G}

\title{Tropical geometry of Rado matroids}
\author{Calum~Buchanan \and Richard~Danner}
\address{Dept.\:of Mathematics \& Statistics \\ University of Vermont \\ Burlington, VT, USA}
\email{$\{\mbox{calum.buchanan}, \mbox{richard.danner}\}$@uvm.edu}
\date{February 19, 2024}

\begin{document}
\begin{abstract}
In this note, we characterize the products of simplicial generators for the Chow ring of a loopless matroid, extending a result of Backman, Eur, and Simpson. We prove that the stable intersection of a collection of tropical hyperplanes centered at the origin with the Bergman fan of a matroid is the Bergman fan of the dual of a certain Rado matroid.
\end{abstract}

\keywords{Tropical geometry, Chow ring, Rado matroid} 
\subjclass{05B35 (Primary), 05B40, 14T05, 14C17 (Secondary)}

\maketitle

The Chow ring $A^\bullet(M)$ of a loopless matroid $M$ on ground set $E$ was introduced by Feichtner and Yuzvinsky in~\cite{feichtner2004chow} as a generalization of the cohomology ring of De Concini and Procesi's wonderful compactification of the complement of a hyperplane arrangement~\cite{de1995wonderful}.
The importance of the Chow ring was demonstrated by Adiprasito, Huh, and Katz in the proof of the Heron-Rota-Welsh conjecture~\cite{adiprasito2018hodge}.
Feichtner and Yuzvinsky define $A^\bullet(M)$ to be the graded ring
$\mathbb{R}[z_F \mid F \in \mathscr{L}_M\backslash \{\varnothing\}]$ modulo the ideals $\langle z_F z_{F'} \mid F,F'~\text{incomparable} \rangle$ and $\langle \sum_{F \supseteq a}z_F \mid a \in \mathfrak{A}_M \rangle$,
where $\mathscr{L}_M$ denotes the lattice of flats of $M$ and $\mathfrak{A}_M$ the set of atoms in $\mathscr{L}_M$.

In 2019, Backman, Eur, and Simpson introduced a set of generators, called {\em simplicial generators}, giving a new presentation of $A^{\bullet}(M)$~\cite{backman2023simplicial}.
The simplicial generators are defined, for each nonempty subset $A$ of $E$, by $h_A = - \sum_{T \supseteq A} z_T(M) \in A^1(M)$.
We denote by $A_{\nabla}^{\bullet}(M)$ the presentation of the Chow ring of $M$ by the simplicial generators: 
\[A_{\nabla}^{\bullet}(M) = \mathbb{R}[h_F \mid F \in \mathscr{L}_M \backslash \{\emptyset \}] / (I+J), \]
where $I=\langle h_a \mid a\in \mathfrak{A}_M\rangle$ and $J=\langle (h_F-h_{F \vee F'})(h_{F'}-h_{F \vee F'}) \mid F,F'\in \mathscr{L}_M \rangle$.
We note that this presentation of $J$, appearing in~\cite{larson2022k}, differs from that of~\cite{backman2023simplicial}.

The simplicial presentation lends itself to the following combinatorial interpretation of the Chow ring of $M$.
A simplicial generator $h_A$ corresponds (via a combinatorial analogue of the cap product) to the {\em principal truncation of $M$ at $F$}, denoted $T_F(M)$, where $F$ is the smallest flat of $M$ containing $A$; this is the matroid with bases $B - f$ over all bases $B$ of $M$ which intersect $F$ nontrivially, and over all $f \in B \cap F$~\cite[Theorem~3.2.3]{backman2023simplicial}.
Furthermore, the cap product allows for a bijection between the monomial basis for $A^c_\nabla (M)$, 
\[\left\{h_{F_1}^{a_1} \cdots h_{F_k}^{a_k} \mid \textstyle{\sum} a_i = c, \ \varnothing = F_0 \subsetneq F_1 \subsetneq \cdots \subsetneq F_{k}, \ 1 \leq a_i < \rk_M(F_i) - \rk_M(F_{i-1}) \right\},\]
and a class of matroids called {\em relative nested quotients of $M$}.
We will forego the definition of these matroids until after defining their generalizations which appear in Theorem~\ref{thm:coradointersections}.
Before stating the theorem, we recall notations and definitions of certain matroid operations.

For a matroid $M$ on ground set $E$, we let $\mathcal{I}(M)$, $\mathcal{B}(M)$, and $\mathcal{C}(M)$ denote the collections of independent sets, bases, and circuits of $M$, respectively.
The rank in $M$ of a subset $S$ of $E$ is denoted by $\rk_M(S)$.
The {\em uniform matroid} $U_{k, E}$ is the matroid whose bases are all of the $k$-subsets of $E$.
The {\em dual} of $M$, denoted $M^*$, is the matroid on $E$ whose bases are the complements of the bases of $M$.
The Bergman fan $\Sigma_M$ of $M$ is the polyhedral fan in $\mathbb{R}^E/\langle e_E \rangle$ consisting of the cones $\operatorname{cone}\{ e_F \mid F \in \mathcal{F} \}$ for each flag $\mathcal{F}=\{ \varnothing \neq F_1 \subsetneq \cdots \subsetneq F_t \neq E \}$ of flats of $M$, where $e_F$ denotes the indicator vector for $F$.

For a subset $S$ of $E$, let $H_S$ denote the corank-$1$ matroid on $E$ with collection of bases $\{ E - s \mid s \in S \}$.
It is well-known that the Bergman fans of corank-$1$ matroids are precisely the tropical hyperplanes centered at the origin.
Speyer~\cite{speyer2008tropical} defined a notion of stable intersection for tropical linear spaces, and it is noted in~\cite{hampe2017intersection} that, as a special case of Theorem~4.11 in the former paper, a stable intersection of Bergman fans is the Bergman fan of a matroid intersection, defined as follows.
The {\em matroid intersection} of matroids $M$ and $N$ on a common ground set $E$, denoted $M \wedge N$, is the matroid whose spanning sets are the intersections of the spanning sets of $M$ and $N$.
A related notion is that of {\em matroid union}, defined in~\cite{nash1966application}: $M \vee N$ is the matroid whose independent sets are of the form $I \cup J$ for $I \in \mathcal{I}(M)$ and $J \in \mathcal{I}(N)$.
We will make use of the fact that $M \wedge N$ can equivalently be defined as $(M^* \vee N^*)^*$.

We characterize the products of simplicial generators in $A^\bullet_\nabla(M)$ using duals of Rado matroids.
In order to define a Rado matroid, we recall Rado's theorem.

\begin{thm}[Rado's theorem~\cite{rado1942theorem}]
Let $M$ be an arbitrary matroid on a set $Y$, and let $\mathcal{X}$ be a collection of subsets $X_1, \ldots, X_m$ of $Y$.
There exists a transversal of $\mathcal{X}$ which is independent in $M$ if and only if $\rk_M(\cup_{j\in J} X_j) \geq |J|$ for all $J \subseteq \{1, \ldots, m\}$.
\end{thm}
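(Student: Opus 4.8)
The plan is to reproduce the classical argument that derives Rado's theorem from (the proof of) Hall's marriage theorem, arguing by induction on the number $m$ of sets; I only sketch the steps. The forward implication needs no work: if $\{y_1,\dots,y_m\}$ is a transversal of $\mathcal{X}$ that is independent in $M$, with $y_j\in X_j$, then for each $J\subseteq\{1,\dots,m\}$ the set $\{y_j:j\in J\}$ is an independent subset of $\bigcup_{j\in J}X_j$, so $\rk_M(\bigcup_{j\in J}X_j)\ge|J|$.

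For the converse I would assume all the rank inequalities and induct on $m$. The case $m\le 1$ is immediate: for $m=1$ the inequality for $J=\{1\}$ says exactly that $X_1$ contains a non-loop, and that element is the transversal. For $m\ge 2$ the induction splits according to whether some inequality is tight on a \emph{proper} nonempty subset of $\{1,\dots,m\}$. \emph{Case 1.} Suppose $\rk_M(\bigcup_{j\in J}X_j)\ge|J|+1$ for every nonempty $J\subsetneq\{1,\dots,m\}$. Choose a non-loop $y\in X_1$ (available since $\rk_M(X_1)\ge 2$) and pass to the contraction $M/y$ together with the sets $X_j\setminus\{y\}$, $j\ge 2$. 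Using the identity $\rk_{M/y}(S)=\rk_M(S\cup\{y\})-1$ and the surplus hypothesis, the Rado inequalities persist for these $m-1$ sets in $M/y$; an independent transversal there, with $y$ adjoined, is the one we want. \emph{Case 2.} Suppose $\rk_M(F)=|J^{\ast}|$ for some nonempty $J^{\ast}\subsetneq\{1,\dots,m\}$, where $F=\bigcup_{j\in J^{\ast}}X_j$. Applying the inductive hypothesis to the subfamily indexed by $J^{\ast}$ produces an independent transversal $\{y_j:j\in J^{\ast}\}\subseteq F$ of size $|J^{\ast}|=\rk_M(F)$, hence a basis of $F$; in particular $\rk_M(S\cup\{y_j:j\in J^{\ast}\})=\rk_M(S\cup F)$ for every $S$. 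Now contract $\{y_j:j\in J^{\ast}\}$ and restrict the remaining sets to $X_j\setminus F$ for $j\notin J^{\ast}$; combining this last identity with $X_j\cup F=(X_j\setminus F)\cup F$ shows the Rado inequalities transfer to these $m-|J^{\ast}|$ sets in $M/\{y_j:j\in J^{\ast}\}$, so induction yields an independent transversal of them contained in the complement of $F$. Its union with $\{y_j:j\in J^{\ast}\}$ is a transversal of all of $\mathcal{X}$, and since one half is independent in $M$ while the other is independent in the contraction by it, the union is independent in $M$.

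The technical engine is the contraction formula $\rk_{M/A}(S)=\rk_M(S\cup A)-\rk_M(A)$ together with monotonicity of the rank function; these are what make Rado's inequalities descend to the smaller instances. I expect the real obstacle to be the bookkeeping in Case 2: one must check simultaneously that the transversal built on $F$ genuinely spans $F$, that after contraction the restricted sets still satisfy Rado's condition, and that the two partial transversals are disjoint and combine to an independent set of size exactly $m$ --- this is precisely where the argument goes beyond merely restating Hall's theorem. An alternative, should this bookkeeping prove unpleasant, is to let $N$ be the transversal matroid of $\mathcal{X}$ on $Y$ and apply the matroid-intersection min--max formula together with the defect version of Hall's theorem to bound below the size of a common independent set of $M$ and $N$; submodularity of $\rk_M$ then yields exactly the Rado inequality. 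If $Y$ or the sets $X_j$ are infinite, one first reduces, using finiteness of $m$ and of the relevant ranks, to the case of finite sets.
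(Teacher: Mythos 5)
The paper does not prove Rado's theorem: it states it as a classical result with a citation to Rado~\cite{rado1942theorem} and then uses it, so there is no argument in the paper to compare against. Your blind sketch is a correct rendition of the standard critical-set induction (the matroidal analogue of the usual inductive proof of Hall's theorem), and the rank bookkeeping you flagged as the likely obstacle does in fact go through cleanly. In Case~1, for nonempty $J\subseteq\{2,\ldots,m\}$ (which is a proper subset of $\{1,\ldots,m\}$) the surplus hypothesis gives $\rk_{M/y}\bigl(\bigcup_{j\in J}(X_j\setminus\{y\})\bigr)=\rk_M\bigl(\{y\}\cup\bigcup_{j\in J}X_j\bigr)-1\ge (|J|+1)-1=|J|$. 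In Case~2, with $T=\{y_j:j\in J^{\ast}\}$ a basis of $F$, the identity $\rk_M(A\cup T)=\rk_M(A\cup F)$ for all $A$ (since $F\subseteq\operatorname{cl}(T)$) combined with the original inequality for $K\cup J^{\ast}$ gives $\rk_{M/T}\bigl(\bigcup_{j\in K}(X_j\setminus F)\bigr)=\rk_M\bigl(\bigcup_{j\in K\cup J^{\ast}}X_j\bigr)-|J^{\ast}|\ge |K|$; moreover this inequality for singletons $K=\{j\}$ guarantees $X_j\setminus F\neq\varnothing$ for $j\notin J^{\ast}$, the two partial transversals are disjoint because one lies in $F$ and the other in $Y\setminus F$, and their union is independent in $M$ by the defining property of contraction. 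Your alternative route via the matroid intersection min--max theorem applied to $M$ and the transversal matroid of $\mathcal{X}$, together with submodularity, is also a standard and correct way to obtain the result.
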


Using Rado's theorem, it is not hard to show that the subsets of $\mathcal{X}$ which have independent transversals in $M$ form the independent sets of a matroid on $\mathcal{X}$.

\begin{defn}\label{defn:RadoMatroid}
Given a matroid $M$ on ground set $Y$ and a multiset $\mathcal{X}$ of subsets $X_1, \ldots, X_m$ of $Y$, the {\em Rado matroid induced by $\mathcal{X}$ and $M$} is the matroid with ground set $\mathcal{X}$ and, for independent sets, those subcollections of $\mathcal{X}$ with a transversal which is independent in $M$.
Abusing terminology slightly, given a bipartite graph $H$ with partition $(\mathcal{X}, Y)$, the subsets of $\mathcal{X}$ which are matched to a set in $\mathcal{I}(M)$ form the independent sets of the {\em Rado matroid induced by $H$ and $M$}, denoted $R_{H,M}$. 
\end{defn}

When $M$ is the free matroid $U_{|Y|, Y}$, Rado's theorem specializes to Hall's theorem on transversals, and we obtain transversal matroids from Rado matroids.
We now introduce a class of graphs to be used in our characterization of products of simplicial generators in $A^\bullet_\nabla(M)$.

\begin{defn}\label{defn:Gamma}
Let $\mathcal{A}$ be a collection of subsets $A_1, \ldots, A_m$ of a finite set $E$, and let $\hat{E}$ be a copy of $E$, $\hat{E} = \{ \hat{e} \mid e \in E \}$.
We define $\graph(\mathcal{A})$ to be the graph with bipartition $(E, \hat{E} \cup \mathcal{A})$ and edge set
\[\{ e \hat{e} \mid e \in E \} \cup \{eA \mid A \in \mathcal{A} \mbox{ and } e \in A\}.\]
\end{defn}

Figure~\ref{fig:Rado} depicts an example of such a graph.
We remind the reader that $\graph(\mathcal{A})$ is {\em not} the graph typically used to represent the set system $\mathcal{A}$, which was denoted by $H$ in Definition~\ref{defn:RadoMatroid}.

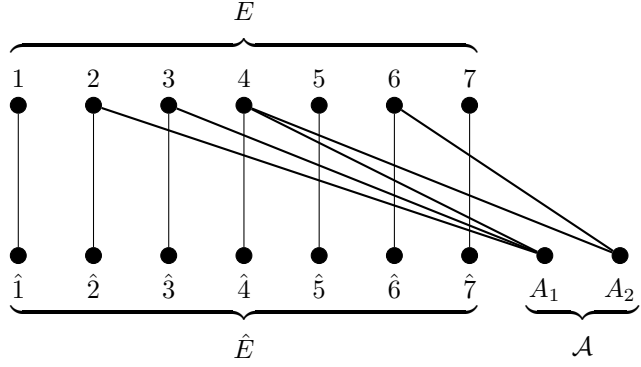
\begin{figure}[ht]
    \centering
    \begin{tikzpicture}
    [every node/.style={circle, draw=black!100, fill=black!100, inner sep=0pt, minimum size=6pt}]

    \foreach \i in {1,...,7}
    {
    \node (\i) at (\i, 2) [label={[yshift=.1cm]$\i$}] {};
    \node (b\i) at (\i, 0) [label={[yshift=-.7cm]$\hat{\i}$}] {};
    \draw (\i) edge (b\i);
    }

    \node (A1) at (8,0) [label={[yshift=-.83cm]$A_1$}] {};
    \node (A2) at (9,0) [label={[yshift=-.83cm]$A_2$}] {};

    \foreach \i in {2,3,4}
    {\draw[thick] (A1) edge (\i);}

    \foreach \i in {4,6}
    {\draw[thick] (A2) edge (\i);}

    \node[rectangle, fill=none,draw=none] (Ebrace) at (4,2.8) {$\overbrace{\hspace{6.2cm}}$};
    \node[fill=none,draw=none] (E) at (4, 3.25) {$E$};

    \node[rectangle, fill=none,draw=none] (hatEbrace) at (4,-.75) {$\underbrace{\hspace{6.2cm}}$};
    \node[fill=none,draw=none] (hatE) at (4, -1.2) {$\hat{E}$};

    \node[rectangle, fill=none,draw=none] (Abrace) at (8.5,-.75) {$\underbrace{\hspace{1.5cm}}$};
    \node[fill=none,draw=none] (A) at (8.5, -1.2) {$\mathcal{A}$};

    \end{tikzpicture}
    \caption{The graph $\graph(\mathcal{A})$, where $E = \{1, \ldots, 7\}$, $\mathcal{A} = \{A_1, A_2\}$, $A_1 = \{2,3,4\}$, and $A_2 = \{4,6\}$.}
    \label{fig:Rado}
\end{figure}

\begin{thm}\label{thm:coradointersections}
Let $\mathcal{A}$ be a collection of (not necessarily distinct) subsets $A_1, \ldots, A_m$ of a set $E$, let $M$ be a matroid on $E$, and let $\graph = \graph(\mathcal{A})$. We have
\[M \wedge H_{A_1} \wedge \cdots \wedge H_{A_m} = (R_{\graph, N})^*,\]
where $N$ is the matroid $\hat{M}^* \oplus U_{m, \mathcal{A}}$ on $\hat{E} \cup \mathcal{A}$, and $\hat{M}^*$ is a copy of $M^*$ on $\hat{E}$.
\end{thm}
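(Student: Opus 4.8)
The plan is to dualize the statement and reduce it to a clean identity about matroid unions. Recall the fact recorded above that $M \wedge N = (M^* \vee N^*)^*$, and note that matroid union is associative (a union of independent sets, one from each factor, does not depend on the order). Iterating, and using $(X^*)^* = X$, one gets
\[
M \wedge H_{A_1} \wedge \cdots \wedge H_{A_m} \;=\; \bigl(M^* \vee H_{A_1}^* \vee \cdots \vee H_{A_m}^*\bigr)^{*}.
\]
Since a matroid and its dual determine each other, it therefore suffices to prove the single identity
\[
R_{\graph,N} \;=\; M^* \vee H_{A_1}^* \vee \cdots \vee H_{A_m}^*,
\]
and I would do this by showing the two matroids have the same independent sets.

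First I would unwind the right-hand side. The bases of $H_{A_i}$ are the sets $E - a$ for $a \in A_i$, so the bases of $H_{A_i}^*$ are the singletons $\{a\}$, $a \in A_i$; that is, $H_{A_i}^*$ is the rank-$1$ matroid on $E$ whose non-loops are exactly the elements of $A_i$. Hence $\mathcal{I}(H_{A_i}^*)$ consists of $\varnothing$ together with the singletons inside $A_i$, and by the definition of matroid union $H_{A_1}^* \vee \cdots \vee H_{A_m}^*$ is the transversal matroid on $E$ presented by $(A_1,\dots,A_m)$: its independent sets are the \emph{partial transversals} of $\mathcal{A}$, i.e.\ the sets $T \subseteq E$ for which there is an injection $T \to \{1,\dots,m\}$, $x \mapsto j_x$, with $x \in A_{j_x}$ for all $x \in T$. (That partial transversals are independent in the union is immediate; conversely, from independent sets $I_j \in \mathcal{I}(H_{A_j}^*)$ one obtains a surjection from $\{\, j : I_j \neq \varnothing\,\}$ onto $\bigcup_j I_j$, any section of which witnesses $\bigcup_j I_j$ as a partial transversal.) Taking the remaining union with $M^*$, the independent sets of $M^* \vee H_{A_1}^* \vee \cdots \vee H_{A_m}^*$ are exactly the sets $I_0 \cup T$ with $I_0 \in \mathcal{I}(M^*)$ and $T$ a partial transversal of $\mathcal{A}$.

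Next I would read off $\mathcal{I}(R_{\graph,N})$ from Definition~\ref{defn:RadoMatroid}. A set $S \subseteq E$ is independent in $R_{\graph,N}$ precisely when $\graph = \graph(\mathcal{A})$ contains a matching that saturates $S$ and whose set of endpoints in $\hat{E} \cup \mathcal{A}$ is independent in $N = \hat{M}^* \oplus U_{m,\mathcal{A}}$. By the definition of $\graph(\mathcal{A})$, the only neighbour of $e$ in $\hat{E}$ is $\hat{e}$, so any such matching partitions $S$ as $S_0 \sqcup S_1$, where each $e \in S_0$ is matched to $\hat{e}$ and each $e \in S_1$ is matched to some $A \in \mathcal{A}$ with $e \in A$, these chosen sets $A$ being distinct. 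Since $U_{m,\mathcal{A}}$ is the free matroid on $\mathcal{A}$, the endpoint set of the matching is $N$-independent if and only if $\{\hat{e} : e \in S_0\}$ is independent in $\hat{M}^*$, i.e.\ if and only if $S_0 \in \mathcal{I}(M^*)$; the $\mathcal{A}$-part imposes no condition. Therefore $S \in \mathcal{I}(R_{\graph,N})$ if and only if $S = S_0 \sqcup S_1$ with $S_0 \in \mathcal{I}(M^*)$ and $S_1$ a partial transversal of $\mathcal{A}$, which is the same family as in the previous paragraph, since a disjoint union $S_0 \sqcup S_1$ is in particular a union, while conversely $I_0 \cup T = I_0 \sqcup (T \setminus I_0)$ with $T \setminus I_0$ still a partial transversal. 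Hence $R_{\graph,N} = M^* \vee H_{A_1}^* \vee \cdots \vee H_{A_m}^*$, and the theorem follows by dualizing.

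The step that needs the most care — rather than being genuinely hard — is the third paragraph: one has to exploit that $\graph(\mathcal{A})$ joins each ground-set element $e$ only to its private copy $\hat{e}$ in $\hat{E}$ in order to force the $\hat{E}$-endpoints of a saturating matching to be exactly $\{\hat e : e \in S_0\}$, and one must correctly transport the condition ``$\hat{M}^*$-independent on $\hat{E}$'' back to ``$M^*$-independent on $E$''. A minor point is justifying the $m$-fold form of $M \wedge N = (M^* \vee N^*)^*$ used at the start; this is routine given the associativity of $\vee$ together with the description of the spanning sets of a matroid intersection as the pairwise intersections of spanning sets.
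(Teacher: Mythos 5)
Your proof is correct, but it takes a genuinely different route from the paper's. The paper argues by induction on $m$: after dualizing, it peels off $H_{A_m}^*$, invokes the inductive hypothesis that $R' = M^* \vee H_{A_1}^* \vee \cdots \vee H_{A_{m-1}}^*$ equals the Rado matroid on the smaller graph $\graph(\mathcal{A}-A_m)$, and then verifies directly (by two containments of independent sets, case-splitting on whether the new element $a$ or the vertex $A_m$ participates) that $R' \vee H_{A_m}^* = R_{\graph,N}$. You instead give a one-shot structural argument: you observe that $H_{A_1}^* \vee \cdots \vee H_{A_m}^*$ is literally the transversal matroid presented by $\mathcal{A}$, so $M^* \vee H_{A_1}^* \vee \cdots \vee H_{A_m}^*$ has as independent sets exactly the unions of an $M^*$-independent set with a partial transversal of $\mathcal{A}$; and on the other side you exploit the bipartite structure of $\graph(\mathcal{A})$ (each $e$ has unique $\hat E$-neighbour $\hat e$, and $U_{m,\mathcal{A}}$ is free so the $\mathcal{A}$-side imposes no independence condition) to read off $\mathcal{I}(R_{\graph,N})$ as exactly the same family. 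What your approach buys is a clean conceptual identification --- the Rado matroid $R_{\graph,N}$ is $M^*$ free-unioned with the transversal matroid of $\mathcal{A}$, with $\hat E$ and $\mathcal{A}$ playing transparently disjoint roles --- rather than an inductive verification in which this structure stays implicit; what the paper's induction buys is that each step is a very small, local check. Both are complete; your write-up is correct, including the careful points about multiplicities (a shared element of several $A_i$ still gives a partial transversal via a section of the index surjection) and about replacing $I_0\cup T$ by the disjoint decomposition $I_0 \sqcup (T\setminus I_0)$.
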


Before proving Theorem~\ref{thm:coradointersections}, we note a corollary obtained via the correspondence between the simplicial generators $h_S$ in $A^\bullet_\nabla(M)$ and matroid intersections of the form $M \wedge H_S$, as described in Section~3 of \cite{backman2023simplicial}.

\begin{cor}
If $\mathcal{A}$ is a collection of subsets $A_1, \ldots, A_m \subseteq E$ and $M$ is a loopless matroid on $E$, then the product of simplicial generators $h_{A_1}\cdots h_{A_m}$ is the Bergman class of the matroid $(R_{\graph(\mathcal{A}), N})^*$ from Theorem~\ref{thm:coradointersections}.
\end{cor}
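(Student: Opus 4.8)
The plan is to deduce the corollary from three inputs, none of which is Theorem~\ref{thm:coradointersections}'s proof itself: the dictionary of Section~3 of~\cite{backman2023simplicial} between the simplicial presentation $A^\bullet_\nabla(M)$ and Minkowski weights on the Bergman fan $\Sigma_M$; the observation recalled just before Theorem~\ref{thm:coradointersections} (via Speyer and Hampe) that a stable intersection of Bergman fans is the Bergman fan of the corresponding matroid intersection; and Theorem~\ref{thm:coradointersections} as a black box. First I would recall that, since $M$ is loopless, the combinatorial cap product with the Bergman class $[\Sigma_M]$ gives an isomorphism from $A^\bullet_\nabla(M)$ onto the ring of Minkowski weights on $\Sigma_M$ with $1 \cap [\Sigma_M] = [\Sigma_M]$, under which ring multiplication corresponds to stable intersection. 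Concretely, as spelled out in Section~3 of~\cite{backman2023simplicial}, the piecewise-linear function attached to the simplicial generator $h_S$ is the one cutting out the tropical hyperplane $\Sigma_{H_S}$, so that for any Minkowski weight $Z$ on $\Sigma_M$ one has $h_S \cap Z = Z \cdot \Sigma_{H_S}$ (stable intersection); in particular $h_S \cap [\Sigma_M] = \Sigma_M \cdot \Sigma_{H_S} = [\Sigma_{M \wedge H_S}]$ by the Speyer--Hampe identification.

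Next I would iterate. Because stable intersection is associative and commutative, capping successively with $h_{A_1}, \ldots, h_{A_m}$ yields
\[
h_{A_1}\cdots h_{A_m} \,\cap\, [\Sigma_M] \;=\; \Sigma_M \cdot \Sigma_{H_{A_1}} \cdots \Sigma_{H_{A_m}} \;=\; \bigl[\,\Sigma_{M \wedge H_{A_1} \wedge \cdots \wedge H_{A_m}}\,\bigr],
\]
where the second equality is the Speyer--Hampe identification applied inductively (each $M \wedge H_{A_i}$ is a quotient of the previous matroid, so the Bergman fans nest inside $\Sigma_M$ and the Minkowski-weight interpretation is legitimate at every step). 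Theorem~\ref{thm:coradointersections} then rewrites the matroid on the right as $(R_{\graph(\mathcal{A}),N})^*$ with $N = \hat{M}^* \oplus U_{m,\mathcal{A}}$, which is exactly the assertion that $h_{A_1}\cdots h_{A_m}$ is the Bergman class of $(R_{\graph(\mathcal{A}),N})^*$.

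The one delicate point — and the main, if modest, obstacle — is consistency in the non-transverse case, i.e.\ when $\rk\bigl(M \wedge H_{A_1} \wedge \cdots \wedge H_{A_m}\bigr) < \rk_M(E) - m$ (for instance whenever $m \ge \rk_M(E)$, which forces $h_{A_1}\cdots h_{A_m} = 0$, or when some $A_i$ is contained in a flat already truncated away). There the intersection matroid has loops, its Bergman fan is empty, and the associated Minkowski weight is the zero cycle; one must check that the cap-product isomorphism and the Speyer--Hampe identification really do send the vanishing product to this zero cycle. This is automatic, since the equality of Theorem~\ref{thm:coradointersections} and the matroid-intersection description of stable intersection are identities of matroids valid irrespective of rank and an empty Bergman fan corresponds to the zero Minkowski weight, but it is worth remarking so the reader sees that the clean formula of the corollary survives the degenerate cases.
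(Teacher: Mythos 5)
Your argument is correct and follows the same route the paper intends: the corollary is deduced from Theorem~\ref{thm:coradointersections} combined with the correspondence in Section~3 of~\cite{backman2023simplicial} between capping with a simplicial generator $h_S$ and stable intersection with the tropical hyperplane $\Sigma_{H_S}$, iterated via associativity. The paper leaves this deduction implicit in a single sentence, so your write-up simply fills in the details (including the degenerate zero-cycle case) of the same argument.
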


\begin{proof}[Proof of Theorem~\ref{thm:coradointersections}]
We work by induction on $m$.
Since $M \wedge H_{A_1} \wedge \cdots \wedge H_{A_m} = (M^* \vee H_{A_1}^* \vee \cdots \vee H_{A_m}^*)^*$, it suffices to show that $M^* \vee H_{A_1}^* \vee \cdots \vee H_{A_m}^* = R_{\graph, N}$.
The base case, $m = 0$, is trivial.
Let $m \geq 1$, and assume that the result holds for the collection $\mathcal{A} - A_m$; that is,
\[ R' := M^* \vee H_{A_1}^* \vee \cdots \vee H_{A_{m-1}}^* \]
is the Rado matroid on $E$ induced by $\graph(\mathcal{A} - A_m)$ and $\hat{M}^* \oplus U_{m-1, \mathcal{A}-A_m}$.
By the associativity of matroid union, it suffices to show that the independent sets of $R' \vee H_{A_m}^*$ are precisely the independent sets of $R_{\graph, N}$.

First, suppose that a subset $I$ of $E$ is independent in $R' \vee H_{A_m}^*$.
We will show that $I$ is matched in $\graph$ to an independent set in the matroid $\hat{M}^* \oplus U_{m, \mathcal{A}}$ on $N$.
By definition, $I = J \cup K$, where $J \in \mathcal{I}(R')$ and $K \in \mathcal{I}(H_{A_m}^*) = \mathcal{I}(U_{1, A_m})$.
If $K \subseteq J$, then $I$ is independent in $R'$, and thus $I$ is independent in $R_{\graph, N}$.
Otherwise, we have $K = \{a\}$ for some $a \notin J$.
Take a matching from $J$ to an independent set of $R'$ and add the edge $a A_m$, which is clearly disjoint from the others, to obtain a matching from $I$ to an independent set in $R_{\graph, N}$.

Second, suppose that $I \subseteq E$ is matched in $\graph$ to a subset $L$ of $\hat{E} \cup \mathcal{A}$ which is independent in $\hat{M}^* \oplus U_{m, \mathcal{A}}$.
If $A_m \notin L$, then $I$ is matched in $\graph$ to an independent set in $\hat{M}^* \oplus U_{m-1, \mathcal{A}-A_m}$ on $\hat{E} \cup \{A_1, \ldots, A_{m-1}\}$; that is, $I \in \mathcal{I}(R')$, and thus $I \in \mathcal{I}(R' \vee H_{A_m}^*)$.
Otherwise, if $A_m \in L$, then there exists some $a$ in $A_m \cap I$ such that $I - a$ is matched to an independent set in $\hat{M}^* \oplus U_{m-1, \mathcal{A}-A_m}$ on $\hat{E} \cup \{A_1, \ldots, A_{m-1}\}$.
Thus, $I - a \in \mathcal{I}(R')$, which implies that $I\in \mathcal{I}(R' \vee H_{A_m}^*)$.
We have shown that $\mathcal{I}(R' \vee H_{A_m}^*) = \mathcal{I}(R_{\graph, N})$, which completes the proof.
\end{proof}

\begin{ex}\label{ex:R*}
Let us again consider the graph $\graph = \graph(\mathcal{A})$ in Figure~\ref{fig:Rado}.
Let $M$ be the graphic matroid for the graph $H$ with edge set $E$ depicted below.
\[
\begin{tikzpicture}
        [every node/.style={circle, draw=black!100, fill=black!100, inner sep=0pt, minimum size=3pt}]

        \node[draw=none, fill=none] (H) at (-1.75, 0) {$H = $};
        
        \node (0) at (0,0) {};
        \node (1) at (-.5,.86) {};
        \node (2) at (1,0) {};
        \node (3) at (-.5,-.86) {};
        \node (4) at (2, 0) {};
        \draw[thick] (0) -- (1) node [midway, right, draw=none, fill=none] {1};
        \draw[thick] (0) -- (2) node [near start, below, draw=none, fill=none] {2};
        \draw[thick] (0) -- (3) node [midway, above=2pt, draw=none, fill=none] {3};
        \draw[thick] (2) -- (3) node [midway, below=2pt, draw=none, fill=none] {4};
        \draw[thick] (3) -- (1) node [midway, above=2pt, left=1pt, draw=none, fill=none] {5};
        \draw[thick] (1) -- (2) node [midway, above=2pt, right=2pt, draw=none, fill=none] {6};
        \draw[thick] (2) -- (4) node [midway, below=2pt, draw=none, fill=none] {7};
\end{tikzpicture}
\]
The matroid $M$ has rank $4$. Letting $N = \hat{M}^* \oplus U_{2, \mathcal{A}}$ and $R = R_{\graph, N}$, we have $M \wedge H_{A_1} \wedge H_{A_2} = R^*$ by Theorem~\ref{thm:coradointersections}.
The Rado matroid $R^*$ is a rank-$2$ matroid with set of bases $\{17, 27, 37, 47, 57, 67\}$.
For instance, $\{1,2,3,5,6\} \subset E$ is matched in $\graph$ to $\{\hat{1}, \hat{3}, \hat{5}, A_1, A_2\} \in \mathcal{B}(M^* \oplus U_{2, \mathcal{A}})$, and so $\{4,7\} \in \mathcal{B}(R^*)$.
\end{ex}

The duals of transversal matroids are known as {\em strict gammoids}.
We refer to the dual of a Rado matroid as a {\em coRado matroid}.
As we noted earlier, it is known that an intersection of matroids corresponds to the stable intersection of their Bergman fans, and that Bergman fans of corank-$1$ matroids are precisely the tropical hyperplanes centered at the origin.
Thus, Theorem~\ref{thm:coradointersections} implies that the stable intersection of a Bergman fan of a matroid with a collection of tropical hyperplanes centered at the origin is the Bergman fan of a certain coRado matroid.
From this, one can recover a special case of a theorem of Fink and Olarte.

\begin{cor}[\cite{fink2022presentations}]
    A matroid is a strict gammoid if and only if its Bergman fan is a stable intersection of tropical hyperplanes centered at the origin. 
\end{cor}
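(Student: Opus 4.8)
The plan is to reduce the corollary to the combinatorial statement that the matroids of the form $H_{A_1}\wedge\cdots\wedge H_{A_m}$ are exactly the strict gammoids, and then to obtain this from Theorem~\ref{thm:coradointersections}. By the result of Speyer recalled above (see~\cite{speyer2008tropical, hampe2017intersection}), a stable intersection of tropical hyperplanes centered at the origin---that is, of $\Sigma_{H_{A_1}},\ldots,\Sigma_{H_{A_m}}$ for nonempty subsets $A_1,\ldots,A_m$ of a set $E$---is the Bergman fan of the matroid intersection $H_{A_1}\wedge\cdots\wedge H_{A_m}$; and a loopless matroid is recovered from its Bergman fan. So, letting $E$ and the $A_i$ vary, it suffices to identify the class $\{H_{A_1}\wedge\cdots\wedge H_{A_m}\}$ with the class of strict gammoids.

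For one inclusion, apply Theorem~\ref{thm:coradointersections} with $M$ equal to the free matroid $U_{|E|,E}$ on $E$. Its only spanning set is $E$, so $U_{|E|,E}\wedge X = X$ for every matroid $X$ on $E$, and hence
\[ H_{A_1}\wedge\cdots\wedge H_{A_m} = U_{|E|,E}\wedge H_{A_1}\wedge\cdots\wedge H_{A_m} = \big(R_{\graph(\mathcal{A}),N}\big)^*, \]
where $\mathcal{A}=(A_1,\ldots,A_m)$ and $N = \hat M^*\oplus U_{m,\mathcal{A}}$. Here $\hat M^* = U_{0,\hat E}$, so every element of $\hat E$ is a loop of $N$; thus a subset of $\hat E\cup\mathcal{A}$ is independent in $N$ exactly when it is contained in $\mathcal{A}$, and $R_{\graph(\mathcal{A}),N}$ is precisely the transversal matroid of the set system $\mathcal{A}$. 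Therefore $H_{A_1}\wedge\cdots\wedge H_{A_m}$ is the dual of a transversal matroid, i.e.\ a strict gammoid. (The same conclusion follows without Theorem~\ref{thm:coradointersections}, using $H_{A_i}^* = U_{1,A_i}$, with $E\setminus A_i$ as loops, together with the fact that the independent sets of $U_{1,A_1}\vee\cdots\vee U_{1,A_m}$ are exactly the partial transversals of $\mathcal{A}$.) Conversely, if $Q$ is a strict gammoid, write $Q = T^*$ with $T$ a transversal matroid on some set $E$ and fix a set system $\mathcal{A}=(A_1,\ldots,A_m)$ on $E$ presenting $T$; running the computation above in reverse gives $Q = H_{A_1}\wedge\cdots\wedge H_{A_m}$, so $\Sigma_Q$ is a stable intersection of the tropical hyperplanes $\Sigma_{H_{A_i}}$ centered at the origin. (The degenerate case $m=0$, which gives the free matroid, is covered by the same conventions.)

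The steps internal to the matroid computation---$U_{|E|,E}\wedge X = X$, the fact that the loops of $N$ do not affect $R_{\graph(\mathcal{A}),N}$, and the identification of $U_{1,A_1}\vee\cdots\vee U_{1,A_m}$ as a transversal matroid---are immediate from the definitions, so essentially no difficulty lies there. The step I expect to require the most care is the reduction in the first paragraph: it invokes, as black boxes, both that stable intersection of Bergman fans realizes matroid intersection~\cite{speyer2008tropical, hampe2017intersection} and that a loopless matroid is determined by its Bergman fan, and it calls for a convention on matroids with loops---for instance, replacing a matroid by its simplification---since $H_{A_1}\wedge\cdots\wedge H_{A_m}$ need not be loopless. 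Once that reduction is in hand, the corollary is an immediate specialization of Theorem~\ref{thm:coradointersections}.
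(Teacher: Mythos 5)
Your proposal is correct and is essentially the approach the paper has in mind: the paper's ``proof'' is a single sentence remarking that the corollary is obtained from Theorem~\ref{thm:coradointersections} by letting $M$ be a free matroid, and your argument is exactly that specialization carried out in detail (observing that $\hat M^* = U_{0,\hat E}$ makes $R_{\graph(\mathcal{A}),N}$ the transversal matroid of $\mathcal{A}$, so the intersection $H_{A_1}\wedge\cdots\wedge H_{A_m}$ is a cotransversal matroid, and conversely). Your explicit treatment of the converse direction, the alternative direct computation via $H_{A_i}^*=U_{1,A_i}$, and the flagged caveat about loops and the Bergman-fan correspondence are all details the paper leaves implicit; none of them changes the route taken.
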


Theorem~7.5 of~\cite{fink2022presentations} states, more generally, that a valuated matroid is a valuated strict gammoid if and only if its associated tropical linear space is a stable intersection of tropical hyperplanes.
The special case, in which the valuations are trivial, is obtained from Theorem~\ref{thm:coradointersections} by letting $M$ be a free matroid.
The matroid $M$ in Example~\ref{ex:R*}, however, is not a strict gammoid, and thus the Bergman fan of $R^*$ is not a stable intersection of tropical hyperplanes.

We now return to the relative nested quotients of a matroid $M$, which are shown in~\cite{backman2023simplicial} to be in correspondence with the monomial bases for the graded pieces of $A^\bullet_\nabla (M)$. 
First, recall the monomial basis for the graded piece of degree $c$ is the set of products $h_{F_1}^{a_1}\cdots h_{F_m}^{a_m}$ of simplicial generators corresponding to nested nonempty flats $F_1, \ldots, F_m \in \mathscr{L}_M$, with each $1 \leq a_i < \text{rk}_M(F_i)-\text{rk}_M(F_{i-1})$ and $\sum a_i = c$.
The coRado matroids in Theorem~\ref{thm:coradointersections} provide a new definition for the {\em relative nested quotients of $M$}: they are matroids of the form $(R_{\graph(\mathcal{A}), N})^*$, where $\mathcal{A}$ is a multiset of flats as described above, with $a_i$ copies of $F_i$ for each $i$.
Furthermore, any principal truncation $T_F(M)$ is given by the dual of the Rado matroid induced by the graph $\graph(\{F\})$ and the matroid $\hat{M}^* \oplus U_{1, \{F\}}$ on $\hat{E} \sqcup \{F\}$.

The graphs $\graph(\mathcal{A})$ in Definition~\ref{defn:Gamma} and the Rado matroids associated to them in Theorem~\ref{thm:coradointersections} can also be used to provide an alternate proof of the Dragon-Hall-Rado theorem of~\cite{backman2023simplicial}.
A collection of (not necessarily distinct) subsets $A_1, \ldots, A_d$ of $E$ is said to satisfy the {\em Dragon-Hall-Rado condition for $M$}, $\dhr(M)$, if $\rk_M(\cup_{j \in J} A_j) \geq |J|+1$ for all nonempty subsets $J$ of $\{1, \ldots, d\}$.

\begin{cor}[Dragon-Hall-Rado theorem~\cite{backman2023simplicial}]
Let $A_1, \ldots, A_d$ be nonempty subsets of a finite set $E$, and let $M$ be a loopless matroid on $E$ of rank $d + 1$.
We have $M \wedge H_{A_1} \wedge \cdots \wedge H_{A_d} = U_{1,E}$ if and only if $\{A_1, \ldots, A_d\}$ satisfies $\dhr(M)$.
\end{cor}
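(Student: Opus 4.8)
The plan is to deduce the corollary from Theorem~\ref{thm:coradointersections} together with one more application of Rado's theorem. Apply Theorem~\ref{thm:coradointersections} with $m = d$ and $\mathcal{A} = \{A_1, \ldots, A_d\}$; writing $\graph = \graph(\mathcal{A})$ and $N = \hat{M}^* \oplus U_{d, \mathcal{A}}$, we obtain $M \wedge H_{A_1} \wedge \cdots \wedge H_{A_d} = (R_{\graph, N})^*$. Because $|\mathcal{A}| = d$, the summand $U_{d, \mathcal{A}}$ is the free matroid on $\mathcal{A}$, so $\rk N = \rk \hat{M}^* + d = (|E| - d - 1) + d = |E| - 1$. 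Taking duals, the identity to be proved is equivalent to $R_{\graph, N} = U_{|E| - 1, E}$, and since $\rk R_{\graph, N} \le \rk N = |E| - 1$, this is in turn equivalent to the statement that for every $e \in E$ the set $E - e$ is independent in $R_{\graph, N}$, that is, $E - e$ is matched in $\graph$ to an independent set of $N$.

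The second step is to unwind this matching condition. In any matching of $\graph$ saturating $E - e$, the elements of $E - e$ matched into $\hat{E}$ form a set $I_0$ whose image in $\hat{E}$ must be independent in $\hat{M}^*$, so $|I_0| \le \rk M^* = |E| - d - 1$, while the elements matched into $\mathcal{A}$ form a set of size at most $|\mathcal{A}| = d$. Since these two sizes sum to $|E| - 1 = (|E| - d - 1) + d$, both bounds hold with equality: $I_0$ is a basis of $M^*$ (so its complement $E \setminus I_0$ is a basis of $M$), and the remaining $d$ elements of $E - e$ form a transversal of $(A_1, \ldots, A_d)$. Conversely, any transversal $T$ of $(A_1, \ldots, A_d)$ with $e \notin T$ and $\{e\} \cup T \in \mathcal{B}(M)$ gives rise to such a matching. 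This reduces the corollary to the following condition $(\star)$: for every $e \in E$ there is a transversal $T$ of $(A_1, \ldots, A_d)$ with $e \notin T$ and $\{e\} \cup T \in \mathcal{B}(M)$.

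It remains to prove $(\star) \Longleftrightarrow \dhr(M)$. For the forward direction assume $\dhr(M)$, fix $e \in E$, and pass to $M/e$, which has rank $d$ since $M$ is loopless. Writing $A_J = \bigcup_{j \in J} A_j$, for each nonempty $J \subseteq \{1, \ldots, d\}$ we have $\rk_{M/e}(A_J - e) = \rk_M(A_J \cup \{e\}) - 1 \ge \rk_M(A_J) - 1 \ge |J|$ by $\dhr(M)$, the case $J = \varnothing$ being trivial; so Rado's theorem provides a transversal $T$ of $(A_1 - e, \ldots, A_d - e)$ that is independent in $M/e$. Then $T$ is a transversal of $(A_1, \ldots, A_d)$ avoiding $e$, and $\{e\} \cup T$ is independent in $M$ of size $d + 1 = \rk M$, hence a basis, so $(\star)$ holds. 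For the converse, suppose $(\star)$ holds but $\rk_M(A_J) \le |J|$ for some nonempty $J$; let $F$ be the smallest flat of $M$ containing $A_J$, so $\rk_M F \le |J|$, and note that $F$ contains the nonempty set $A_{j_0}$ for any $j_0 \in J$. Picking $e \in F$ and letting $T = \{a_1, \ldots, a_d\}$ with $a_i \in A_i$ be the transversal supplied by $(\star)$, the set $\{e\} \cup \{a_j : j \in J\}$ is a subset of the basis $\{e\} \cup T$, hence independent; it has exactly $|J| + 1$ elements and is contained in $F$, contradicting $\rk_M F \le |J|$. Hence $\dhr(M)$ holds.

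The crux is the second paragraph: determining exactly which matchings of $\graph$ saturate $E - e$ is what pins $I_0$ down to a full basis of $M^*$, and it is this rigidity that yields the conclusion $U_{1,E}$ rather than the weaker statement that $M \wedge H_{A_1} \wedge \cdots \wedge H_{A_d}$ merely has rank one. A secondary subtlety is the choice of $e$ in the final step, which must be taken inside the flat $F$ spanned by $A_J$ rather than outside it. The remaining arguments are routine bookkeeping.
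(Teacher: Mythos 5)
Your proof is correct, and while the necessity half matches the paper's argument (the same matching count forces $I_0$ to be a full basis of $M^*$), your sufficiency half takes a genuinely different route.

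The paper's sufficiency argument first invokes Proposition~5.2.3 of \cite{backman2023simplicial} to translate $\dhr(M)$ into a transversal condition, and then carries out a somewhat delicate induction on $|\mathcal{A}'|$, showing at each stage that $M \wedge H_{A_{i_1}} \wedge \cdots \wedge H_{A_{i_k}}$ remains loopless and drops in rank by exactly one; the base case $M \wedge H_A$ needs its own coloop argument via the basis exchange axiom. Your argument instead isolates the sharp condition $(\star)$ equivalent to $R_{G,N} = U_{|E|-1,E}$ — namely that $\{e\} \cup T \in \mathcal{B}(M)$, not merely $T \in \mathcal{I}(M)$ — and then proves $(\star) \Leftrightarrow \dhr(M)$ directly, by applying Rado's theorem to the contraction $M/e$ (noting $\rk_M(A_J) \geq |J|+1$ translates to $\rk_{M/e}(A_J - e) \geq |J|$). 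This is more self-contained, as it reproves the transversal equivalence rather than citing it, and it bypasses the looplessness/rank induction entirely. The one thing the paper's induction makes visible that yours does not is the intermediate structure: it shows every partial intersection $M \wedge H_{A_{i_1}} \wedge \cdots \wedge H_{A_{i_k}}$ is loopless of rank $d+1-k$, which is of independent interest in the context of nested quotients. As a small remark, your observation that $(\star)$ is strictly stronger than the condition in Proposition~5.2.3 (which asks only $T \in \mathcal{I}(M)$) is worth keeping: it is precisely the rigidity coming from the size count on the matching that upgrades independence to basis, and this is what pins the intersection to $U_{1,E}$ rather than something of rank $1$ with loops.
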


\begin{proof}
Let $\mathcal{A} = \{A_1, \ldots, A_d\}$, let $\graph = \graph(\mathcal{A})$, and let $R = R_{\graph, N}$, where $N$ is the matroid $\hat{M}^* \oplus U_{d, \mathcal{A}}$ on $\hat{E} \cup \mathcal{A}$ (as in Theorem~\ref{thm:coradointersections}).
It follows from Rado's theorem that $\mathcal{A}$ satisfies $\dhr(M)$ if and only if, for any $e \in E$, there is a transversal $I$ of $\mathcal{A}$ such that $e \notin I$ and $I \in \mathcal{I}(M)$~\cite[Proposition~5.2.3]{backman2023simplicial}.
Thus, it suffices to check that $R = U_{|E|-1, E}$ if and only if, for any $e \in E$, $\mathcal{A}$ is matched in $\graph$ to an independent set in $M$ which does not contain $e$.

To prove necessity, suppose that $R = U_{|E|-1, E}$, and let $e \in E$ be arbitrary.
Since $E - e$ is a basis of $R$, it is matched in $\graph$ to a basis $\hat{B}^*$ of $\hat{M}^*$ (of cardinality $|E|-d - 1$) and to each of the vertices $A_1, \ldots, A_d \in V(\graph)$.
Letting $I$ be the set of vertices matched to $A_1, \ldots, A_d$, we see that $I$ is independent in $M$ and does not contain $e$.

To prove sufficiency, suppose that $\mathcal{A}$ satisfies $\dhr(M)$. We prove a stronger claim by induction: for any subset $\mathcal{A'}$ of $\mathcal{A}$, $(R_{\graph(\mathcal{A}'), N'})^*$ is a loopless matroid of rank $d + 1 - |\mathcal{A}'|$, where $N' = \hat{M}^* \oplus U_{|\mathcal{A}'|, \mathcal{A}'}$.

Let $A \in \mathcal{A}$. Since $\mathcal{A}$ satisfies $\dhr(M)$, so does any subset of $\mathcal{A}$; in particular, $\rk_M(A) \geq 2$.
Let $\{a_1, a_2\}$ be a subset of $A$ in $\mathcal{I}(M)$.
We first show that $M \wedge H_A$ is loopless and has rank $d$.
Let $B^* \in \mathcal{B}(M^*)$ be contained in $E - \{a_1, a_2\}$ and let $e \in E$.
If $e \notin B^*$, then $B^* \cup \{a_1\}$ is a basis of $M^* \vee H_A^*$ (of rank $|E| - d$) that does not contain $e$.
Otherwise, since $M^*$ has no coloops, let $B^*{'} \in \mathcal{B}(M^*)$ with $e \notin B^*{'}$.
By the basis exchange axiom, there is an element $f \in B^*{'}$ such that $B^* - e \cup f \in \mathcal{B}(M^*)$.
For at least one $i \in \{1, 2\}$, we have $a_i \neq f$, and thus $B^* - e \cup \{f, a_i\}$ is a basis of $M^* \vee H_A^*$.
Therefore, $M^* \vee H_A^*$ has rank $|E| - d$ and no coloops, implying that $M \wedge H_A$ is loopless of rank $d$.

Now, for a subset $\mathcal{A}' \subseteq \mathcal{A}$ of cardinality at least $2$, let $A \in \mathcal{A}'$, let $e \in E$, and let $I \subseteq E - e$ be a transversal of $\mathcal{A}'$ in $\mathcal{I}(M)$.
Consider the transversal $I - a$ of $\mathcal{A}' - A$.
Since $E-I$ is matched in $\graph(\mathcal{A}')$ to a spanning set of $\hat{M}^*$ and $a$ is matched to $A$, it follows that $E - I \cup a$ spans $M^* \vee H_A^*$, and thus $I - a \subseteq E - e$ is in $\mathcal{I}(M \wedge H_A)$.
Therefore, $\mathcal{A}' - A$ satisfies $\dhr(M \wedge A)$.
The desired result follows by induction and the associativity of matroid intersection.
\end{proof}

Eur and Larson~\cite{eur2023intersection} generalized the simplicial presentation for the Chow ring of a matroid to augmented Chow rings of polymatroids.
In future work, we hope to generalize Theorem~\ref{thm:coradointersections} to the case of polymatroids as well.

\section*{Acknowledgement}

The authors would like to thank Spencer Backman for valuable advice and conversations during the project.
Spencer Backman would in turn like to thank Chris~Eur, Alex~Fink, Jorge~Olarte, Benjamin~Schröter and Connor~Simpson for inspiring discussions.

\end{document}